\documentclass[10pt]{amsart}
\usepackage{amsthm}
\usepackage{amsmath}
\usepackage{bbm}
\usepackage{nicefrac}
\usepackage{graphicx}
\usepackage{amssymb}
\usepackage{color}
\usepackage{verbatim}
\usepackage{cite}
\usepackage[font=small,skip=5pt]{caption}
\usepackage{enumitem}
\usepackage[makeroom]{cancel}
\usepackage{mathtools}
\usepackage[most]{tcolorbox}
\usepackage[all]{xy}
\usepackage{tikz-cd}
\usepackage{hyperref}
\hypersetup{
	colorlinks=true,
	linkcolor=blue,
	filecolor=magenta,      
	urlcolor=blue,
    citecolor=blue,
}
\usepackage{mathrsfs}
\usepackage{todonotes}

\newtheorem{theorem}{Theorem}[section]
\newtheorem{lemma}[theorem]{Lemma}
\newtheorem{corollary}[theorem]{Corollary}
\newtheorem{proposition}[theorem]{Proposition}
\newtheorem{conjecture}[theorem]{Conjecture}
\newtheorem{problem}[theorem]{Problem}
\newtheorem{remark}[theorem]{Remark}
\newtheorem{definition}[theorem]{Definition}

\newcommand{\sshf}[1]{\mathscr{O}_{#1}}

\newcommand{\prj}[1]{\mathbb{P}^{#1}}

\newcommand{\iso}{\simeq}

\newcommand{\paren}[1]{\left(#1\right)}

\newcommand{\conv}[1]{\textrm{Conv}(#1)}

\newcommand{\height}[1]{\textrm{ht}(#1)}
\newcommand{\ord}[1]{\textrm{ord}(#1)}

\newcommand{\RR}{\mathbb{R}}
\newcommand{\CC}{\mathbb{C}}
\newcommand{\ZZ}{\mathbb{Z}}

\title{The integral closedness of lattice simplices with large lattice length}

\author{
Lei Song, Huanqi Wen and Zhixian Zhu} 

\begin{document}
\maketitle

\begin{abstract}
We prove that every $n$-dimensional lattice simplex $P$ whose lattice length $L(P)\ge n-1$ is integrally closed. As an application, we obtain a simple criterion for the projective normality of ample line bundles on  $\mathbb{Q}$-factorial toric Fano varieties with Picard number one. We further obtain a refinement of this result in terms of the invariant $\Gamma_{P}$ introduced in \cite{GonZhu2022}.
\end{abstract}

\footnotetext[1]{2020 \textit{Mathematics Subject Classification}. Primary 14M25; Secondary 52B20, 11P21.
}
\footnotetext[2]{\textit{Key words and phrases}. Polytope, simplex, lattice length, integral closedness, projective normality.}

\section{Introduction}
Let $M$ be a free $\mathbb{Z}$-module of rank $n\ge 1$ and let $M_{\RR}=M\otimes_{\ZZ}\RR$ be the associated $n$-dimensional vector space. A polytope $P\subseteq M_{\RR}$ is defined as the convex hull of a finite set $\{u_0,\dots,u_m\}\subseteq M_{\RR}$, denoted by $P=\conv{u_0,\cdots,u_m}$. After removing redundant elements, we may assume that $u_0,\dots,u_m$ are the vertices of $P$. We call $P$ a lattice polytope if $u_i\in M$ for all $i$. Throughout this paper, all polytopes are assumed to be full-dimensional. 

Let $P,\ P_1,\ P_2$ be polytopes in $M_{\mathbb{R}}$ and $r\in \mathbb{R}$. The Minkowski sum $P_1+P_2:=\{u_1+u_2\: | \: u_1\in P_1, u_2\in P_2\}$ and the dilation by scalar $rP:=\{ru \: | \: u\in P\}$ . When $r$ is a natural number, $rP=\sum^r_{i=1} P$.

A fundamental problem concerning lattice polytopes is: 
\begin{problem}
    For which lattice polytope $P$ does the equality
    \begin{equation}
    \label{eq: integrally closed P}
        P\cap M+(rP)\cap M=(r+1)P\cap M
    \end{equation}
    hold for all positive integers $r$?
\end{problem}

A lattice polytope that satisfies the equality \eqref{eq: integrally closed P}
for all $r\in \ZZ_{>0}$ is called \textit{integrally closed}. From the perspective of algebraic geometry, the integral closedness of $P$ is equivalent to the projective normality of the polarized toric variety $(X,L)$ associated with $P$. 

Let us briefly recall some basics in toric geometry (cf. \cite{FultonIntroToToric,CoxToric}). Let $N=\text{Hom}(M,\ZZ)$. Given a lattice polytope $P\subseteq M_{\RR}$, one associates to $P$ a normal fan $\Delta_P$, which defines a projective toric variety $X$ together with an ample Cartier divisor $D=D_P$. The associated ample line bundle is $L=\mathcal{O}_X(D)$. Denote by $T$ the algebraic torus in $X$. For $u\in M$, let $\chi^u$ denote the corresponding regular function on $T$, also viewed as a rational function on $X$. Then one can describe the global sections of $L$ through the lattice points in $P$: 
\begin{equation}
    \label{eq: global section of L}
    H^0(X,L)\cong \bigoplus_{u\in P\cap M} \CC\cdot \chi^u.
\end{equation}
Moreover, the dilated polytope $rP$ corresponds to the r-fold tensor product $L^{\otimes r}$, and the multiplication map
\begin{equation}
\label{eq: multiplication map}
    H^0(X,L)\otimes H^0(X,L^{\otimes r})\to H^0(X,L^{\otimes (r+1)})
\end{equation}
sends $\chi^{u_1}\otimes \chi^{u_2}$, where $u_1\in P\cap M$ and $u_2\in rP\cap M$, to $\chi^{u_1+u_2}$ under the identification \eqref{eq: global section of L}. Therefore, the equality \eqref{eq: integrally closed P} amounts to the surjectivity of the multiplication
map \eqref{eq: multiplication map}. Recall that a basepoint free line bundle $L$ is said to be projectively normal if the multiplication map \eqref{eq: multiplication map} is surjective for all $r\in \ZZ_{>0}$. Hence, $L$ is projectively normal if and only if $P$ is integrally closed.

Projective normality for a general smooth projective variety is already highly nontrivial even for surfaces. For instance, Mukai's conjecture (cf. \cite{EL93}) remains widely open at the level of projective normality. Inspired by the Mukai conjecture, the first and third authors proposed the following conjecture in the toric setting, see \cite{SWZ24}.
\begin{conjecture}\label{Conj: Song-Zhu}
    Let $P\subseteq M_{\RR}$ be an $n$-dimensional lattice polytope. If $L(P)\ge n-1$, then the equality \eqref{eq: integrally closed P} holds for all $r\in \ZZ_{>0}$, i.e., $P$ is integrally closed.
\end{conjecture}

Here $L(P)$ denotes the lattice length of $P$ (see Definition \ref{def: lattice length}). In the toric setting, we can interpret the lattice length in terms of intersection numbers. More precisely, each edge $e$ of $P$ corresponds to a $T$-invariant curve $C$ on $X$, and the intersection number $L\cdot C$ is equal to the lattice length of $e$, where $L$ is the line bundle associated with $P$ as above. By Definition \ref{def: lattice length}, the lattice length of $P$ is the minimum of the lattice lengths of all edges of $P$. Therefore, Conjecture \ref{Conj: Song-Zhu} can be rephrased as follows: if $L\cdot C\ge n-1$ for every $T$-invariant curve $C$, then $L$ is projectively normal.

Several partial results toward this conjecture are known. In \cite{OgataNakagawa2002}, Ogata and Nakagawa proved that every dilated polytope $P=sQ$ with $s\ge n-1$ is integrally closed. 
By Gubeladze \cite{Gubeladze2012}, $P$ is projectively normal provided that $L(P)\ge 4n(n+1)$, and the condition can
be relaxed to $L(P)\ge n(n+1)$ for lattice simplices. However, it is desirable to have a condition that is linear in dimension. In \cite{SWZ24}, by special covering, we proved the conjecture in dimension $3$ for every lattice simplex and in dimension $4$ for most lattice simplices.

In this paper, we prove Conjecture \ref{Conj: Song-Zhu} for lattice simplices.
\begin{theorem}
\label{thm: main thm 1}
    Let $P\subseteq M_{\mathbb{R}}\iso \mathbb{R}^n$ be a lattice simplex. Suppose $L(P)\ge n-1$. Then $P$ is integrally closed.  
\end{theorem}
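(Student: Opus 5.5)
We have to show that for every lattice point $w\in (r+1)P\cap M$ there is some $u\in P\cap M$ with $w-u\in rP$. Write $P=\conv{v_0,\dots,v_n}$ and use barycentric coordinates: $w=\sum_i \lambda_i v_i$ with $\lambda_i\ge 0$ and $\sum_i\lambda_i=r+1$. The hypothesis $L(P)\ge n-1$ says that for all $i\neq j$ the edge vector $v_j-v_i$ equals $\ell_{ij} e_{ij}$, where $e_{ij}$ is primitive and $\ell_{ij}\ge n-1$. The goal is to locate a lattice point $u=\sum_i\mu_i v_i\in P$ whose coordinates satisfy $\mu_i\le\lambda_i$ for every $i$. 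Then $w-u=\sum_i(\lambda_i-\mu_i)v_i$ has nonnegative coefficients summing to $r$, so $w-u\in rP$.

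\textbf{Reduction to $r\le n-1$.} First I would invoke the standard fact that a lattice simplex (indeed any lattice polytope) satisfies $P\cap M+(rP)\cap M=(r+1)P\cap M$ for all $r\ge n-1$. The reason is Carathéodory-type: if $w\in (r+1)P$ with $r+1\ge n$, then since $\sum_i\lambda_i=r+1\ge n$ and there are $n+1$ coordinates, some $\lambda_i\ge 1$. The natural candidate is $u=v_i$. This fact is well known (Bruns--Gubeladze--Trung, or equivalently the argument from \cite{OgataNakagawa2002}). So only the finitely many cases $1\le r\le n-2$ remain. In these cases $\sum_i\lambda_i=r+1\le n-1$, which means many $\lambda_i$ are small. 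If any $\lambda_i\ge1$ we are done with $u=v_i$, so we may assume every $\lambda_i<1$.

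\textbf{Using long edges.} Since each $\lambda_i<1$ and $\sum_i\lambda_i=r+1\ge 2$, at least three coordinates are positive. Pick indices $i\ne j$ with $\lambda_i+\lambda_j$ large; by averaging, some pair has $\lambda_i+\lambda_j\ge 2(r+1)/(n+1)$. The lattice points on the edge $[v_i,v_j]$ are $v_i+\tfrac{k}{\ell_{ij}}(v_j-v_i)$ for $k=0,\dots,\ell_{ij}$. Such a point has barycentric coordinates $(1-t,t)$ on $(i,j)$, where $t=k/\ell_{ij}$, and these are spaced by at most $1/(n-1)$. So we need some $t\in\tfrac{1}{\ell_{ij}}\ZZ$ with $1-t\le\lambda_i$ and $t\le\lambda_j$, i.e. $t\in[1-\lambda_i,\lambda_j]$. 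This interval has length $\lambda_i+\lambda_j-1$, so the argument works once $\lambda_i+\lambda_j-1\ge 1/\ell_{ij}$, roughly whenever $\lambda_i+\lambda_j\ge 1+\tfrac1{n-1}$. Edges alone will not suffice in general, so I would generalize to faces. For a face $F=\conv{v_i: i\in S}$, its dilations also have long edges, so by induction on dimension (with the base case $n=1,2$ being trivial since every lattice polygon is integrally closed) the face $F$ is integrally closed. Setting $\Lambda_S=\sum_{i\in S}\lambda_i$, the point $w_S=\sum_{i\in S}\lambda_i v_i$ is not a lattice point, so induction does not directly apply. Instead, the aim is to find a lattice point of $F$ in the translated box $\{\mu:\ \mu_i\le\lambda_i,\ \sum_{i\in S}\mu_i=1\}$, a simplex of "size" $\Lambda_S-1$ inside $F$, homothetic to $-F$ up to shape.

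\textbf{Main obstacle.} The main obstacle is exactly this covering statement: showing that a copy of $-(\Lambda_S-1)F$ (the region $\{\mu\le\lambda\}\cap F$), translated by a point of $F$, contains a lattice point as soon as $\Lambda_S-1$ is large relative to $1/L(P)$. Equivalently, $P$ must be covered by the translates $u-\tfrac{r}{?}\cdot(\text{unit-size regions})$. Here I would use the edge-length hypothesis through the sublattice generated by edge vectors. The points $v_0+\sum_j \tfrac{k_j}{\ell_{0j}}(v_j-v_0)$ form a lattice $\Lambda'\subseteq M$ (in affine coordinates centered at $v_0$) whose fundamental simplex is $\tfrac{1}{n-1}$-scaled or smaller. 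So $\Lambda'\cap P$ contains the lattice points of the dilated standard simplex $(n-1)\Delta$ in barycentric coordinates, namely all $\mu$ with $\mu_j\in\tfrac1{\ell_{0j}}\ZZ$ for $j\ge1$ and $\mu_0=1-\sum_{j\ge1}\mu_j\ge0$. Then I reduce to a purely combinatorial lemma: given $\lambda\in[0,1)^{n+1}$ with $\sum_i\lambda_i=r+1\ge2$, there exists $\mu$ with $\mu_j\in\tfrac1{\ell_{0j}}\ZZ_{\ge0}$ for $j\ge1$, $\mu\le\lambda$ coordinatewise, and $\sum_i\mu_i=1$. The base vertex $v_0$ is chosen as the index with the largest $\lambda_0$. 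I would prove this greedily: set $\mu_j=\lfloor \ell_{0j}\lambda_j\rfloor/\ell_{0j}$ for $j\ge 1$. This gives $\sum_{j\ge1}\mu_j\ge \sum_{j\ge1}\lambda_j-\tfrac{n}{n-1}\cdot(\text{something})$, where the loss is less than $1/\ell_{0j}$ on each nonzero coordinate. I then decrease the $\mu_j$ as needed so that the total is at most $1$, and check that the leftover $\mu_0=1-\sum_{j\ge1}\mu_j$ is at most $\lambda_0$. The tight case is $r=1$, i.e. $\sum_i\lambda_i=2$. There the loss bound must be sharpened by counting how many $\lambda_j$ are nonzero (at most $n$, each less than $1$) and using $\lambda_0\ge 2/(n+1)$. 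This delicate bookkeeping, which is where $n-1$ rather than $n$ should come out exactly, is the step I expect to be hardest.
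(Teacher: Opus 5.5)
Your treatment of the range $r+1\ge 3$ is sound, and it is essentially the paper's Proposition~\ref{r>=3 generalization}. Take $v_0$ with the largest coefficient. The rounded values $\mu_j=\lfloor \ell_{0j}\lambda_j\rfloor/\ell_{0j}$ satisfy $\sum_{j\ge1}\mu_j>(r+1-\lambda_0)-\tfrac{n}{n-1}\ge 1-\lambda_0$. Lowering them in steps of size $1/\ell_{0j}\le\tfrac1{n-1}\le\tfrac3{n+1}\le\lambda_0$ then lands the sum in $[1-\lambda_0,1]$; the paper does this bookkeeping with Lemma~\ref{harness}. So you do not need the reduction to $r\le n-2$. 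Your Carath\'eodory justification of that reduction is also incomplete when $r+1=n$, since then all $n+1$ coefficients can be $<1$. The Bruns--Gubeladze--Trung argument triangulates $P$ into empty simplices, so the point subtracted there need not be a vertex of $P$.

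The genuine gap is the case $r=1$, i.e.\ $2P\cap M=P\cap M+P\cap M$, which is the heart of the theorem. There your final combinatorial lemma is false. At $\sum_i\lambda_i=2$ the rounding loss (up to $\tfrac n{n-1}$) exceeds the available slack $1$, and no sharper counting can recover it.

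\emph{The lemma as stated fails.} The paper's remark after Proposition~\ref{r>=3} already gives a counterexample. Take $P=\conv{0,2e_1,2e_2,2e_3}$ with $M=\ZZ^3$, so $L(P)=2=n-1$, and $\lambda=(\tfrac35,\tfrac7{15},\tfrac7{15},\tfrac7{15})$. Every $\mu_j\in\tfrac12\ZZ$ with $\mu_j\le\tfrac7{15}$ must vanish, which forces $\mu_0=1>\lambda_0$.

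\emph{Integrality does not rescue the edge lattice.} Your lemma ignores that $w$ is a lattice point, but restricting $u$ to $\Lambda'$ fails even for honest lattice points. Keep the same $P$ but take $M=\ZZ^3+\ZZ\cdot\tfrac14(1,1,1)$. All edges still have lattice length $2$, and $w=\tfrac34(1,1,1)\in 2P\cap M$ has $\lambda=(\tfrac78,\tfrac38,\tfrac38,\tfrac38)$. No $u\in\Lambda'=\ZZ^3$ works: nonnegativity of $w-u$ forces $u=0$, and $w\notin P$. Yet $w=\tfrac14(1,1,1)+\tfrac12(1,1,1)$ with both summands in $P\cap M$ but off $\Lambda'$.

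\emph{The missing idea.} You need a way to produce such lattice points, and this is where the paper uses that $w\in M$.
\begin{itemize}
\item It first peels off the parts $t_i/l_{0i}$ exactly as in the case $r\ge 3$.
\item It then writes $w-2u_0=\sum_i y_i$ with the $y_i$ in the minimal Hilbert basis of the tangent cone $C(P,u_0)$.
\item Each $u_0+y_i$ lies in $P$ by Lemma~\ref{small norm} and Lemma~\ref{Gamma}, because $\mathrm{ht}(y_i)\le\Gamma_P+1\le n-1\le L(P)$.
\item Finally the $y_i$ are split into two groups of total weight at most $1$. This uses $\sum_i|y_i|<\tfrac n{n-1}$, plus a separate case when one $y_i$ is heavy.
\end{itemize}
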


As explained in \cite{Fujino03}, every $\mathbb{Q}$-factorial toric Fano variety with Picard number one can be obtained from a lattice simplex, so we have
\begin{corollary}
Let $X$ be an $n$-dimensional $\mathbb{Q}$-factorial toric Fano variety with Picard number one and $L$ be an ample line bundle on $X$. Suppose $L\cdot C\ge n-1$ for any $T$-invariant curve $C$ on $X$. Then $L$ is projectively normal. \qed
\end{corollary}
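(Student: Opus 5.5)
We will argue in barycentric coordinates. Write $P=\conv{v_0,\dots,v_n}$. Every point $w\in(r+1)P$ has a unique expression $w=\sum_i \lambda_i v_i$ with $\lambda_i\ge 0$ and $\sum\lambda_i=r+1$. The goal is to find a lattice point $u\in P\cap M$ with $w-u\in rP$. This condition says exactly that $u=\sum\mu_i v_i$ with $0\le\mu_i\le\lambda_i$ and $\sum\mu_i=1$. Equivalently, $u$ must lie in the simplex
\[
\Delta_w:=P\cap\Big(w-rP\Big),
\]
that is, in $P$ intersected with the translated box cut out by $\mu\le\lambda$.

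The standard first step is to reduce to the fundamental parallelepiped. Consider the cone over $P\times\{1\}$ and its semigroup, which is generated by the $(v_i,1)$ together with the finitely many lattice points of the half-open parallelepiped $\Pi=\{\sum t_i(v_i,1):0\le t_i<1\}$. Any lattice point of $(r+1)P$ is then $x+\sum a_i v_i$ with $x\in\Pi$ of height $h\le n$ and $a_i\in\ZZ_{\ge0}$. If some $a_i>0$ we simply peel off the vertex $v_i$. So the only nontrivial case is $w=x$ itself, with all $\lambda_i<1$ and height $h=\sum\lambda_i\in\{2,\dots,n\}$.

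To exploit large lattice length, we use $L(P)\ge n-1$ to subdivide edges. Each edge $[v_i,v_j]$ contains the points $v_i+\frac{k}{\ell_{ij}}(v_j-v_i)$, which are lattice points. We plan to show that $\Delta_w$, which has barycentric "size" controlled by the $\lambda_i$, always contains such a point, or more generally a lattice point of the form $\sum_i \frac{k_i}{m}v_i$. The key step is a pigeonhole/covering argument. Replace $P$ by $P'=\frac1{n-1}P$ relative to a vertex. Each edge length at least $n-1$ means the affine lattice $\Lambda$ generated by $\frac{1}{n-1}$-steps along edges from $v_0$ contains many points of $P$, namely all points $v_0+\sum c_j\frac{v_j-v_0}{n-1}$ with $c_j\in\ZZ_{\ge 0}$ and $\sum c_j\le n-1$. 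These form the lattice points of a dilated unimodular-type simplex in barycentric coordinates; in barycentric coordinates they are the grid $\{\mu:(n-1)\mu\in\ZZ^{n+1}_{\ge0}\}$. We must then show that for every $\lambda$ with $0\le\lambda_i<1$ and $\sum\lambda_i=h\ge2$, there is a grid point $\mu$ with $\mu\le\lambda$ and $\sum\mu=1$. Setting $\mu_i=\lfloor (n-1)\lambda_i\rfloor/(n-1)$ gives
\[
\sum_i\mu_i\ge\frac{(n-1)h-(n+1)+\#\{i:(n-1)\lambda_i\in\ZZ\}}{n-1},
\]
and we need this to be at least $1$ so that we can then decrease coordinates down to sum exactly $1$.

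The main obstacle is this inequality at height $h=2$: there we only get $(2(n-1)-n-1)/(n-1)=(n-3)/(n-1)<1$, so the crude grid is insufficient. To handle it we would use more of the lattice structure. First, the grid with step $1/\ell_{ij}$ can differ per edge, and $\Delta_w$ need not be approached from $v_0$; we may choose the base vertex and which edges to subdivide adaptively, for instance the vertex with the largest $\lambda_i$. Second, we would use the symmetry $x\mapsto$ (complement in $\Pi$), which sends height $h$ to $n+1-h$ minus the number of zero coordinates, to transfer low-height cases to high-height ones. Third, we use that $w$ itself is a lattice point, so the fractional parts of $(n-1)\lambda$ satisfy a congruence condition that forces rounding losses to be smaller than the worst case. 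We expect that combining the adaptive choice of base vertex, which gains roughly one unit since the largest $\lambda_i\ge h/(n+1)$, with the duality for $h$ near $n$ closes the gap. Making the low-height case $h=2$ (and $h=3$) rigorous is where we expect most of the work to lie.
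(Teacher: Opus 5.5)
There is a genuine gap. The height-$2$ case ($h=\sum_i\lambda_i=2$), which you correctly identify as the crux, is never proved, and the three devices you list do not close it.

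Rounding along edges cannot work there by itself. For $P=\conv{0,2e_1,2e_2,2e_3}$, the point $(\tfrac{14}{15},\tfrac{14}{15},\tfrac{14}{15})\in 2P$ does not lie in $P\cap M+P$, so the integrality of $w$ must be used essentially. Even with the base vertex chosen so that $\lambda_0$ is maximal, the rounding loss $\sum_{j\ge1}(\lambda_j-\mu_j)$ is only bounded by $\sum_{j\ge1}1/\ell_{0j}\le n/(n-1)$, which exceeds the available slack $1$. The complement involution on $\Pi$ relates heights, but it does not transport a splitting $w=u+(w-u)$. The congruence condition is never formulated.

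The missing idea is that the lattice point $u$ you peel off need not lie on an edge grid $u_0+\sum_j c_j\hat{u}_{0j}$. The paper proceeds in three steps:
\begin{enumerate}
\item It splits off the integer multiples $t_j\hat{u}_{0j}$ using the partition Lemma~\ref{harness}.
\item It writes the remaining lattice vector $\tilde{x}$, whose edge coordinates are all $<1/\ell_{0j}$, as $\tilde{x}=\sum_i y_i$ with the $y_i$ minimal Hilbert basis elements of the tangent cone $C(P,u_0)$. Because $\Gamma_P\le n-2$ (Lemma~\ref{Gamma}) and $L(P)\ge n-1$, each satisfies $|y_i|\le 1$, i.e.\ $u_0+y_i\in P\cap M$ (Lemma~\ref{small norm}).
\item Since $\sum_i|y_i|<n/(n-1)$, Lemma~\ref{harness} splits the $y_i$ into two groups of total size at most $1$. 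In the remaining case, where some $|y_m|>2-\sum_i|y_i|$, the same bound forces $\sum_{i<m}|y_i|\le 1$, and one takes $(u_0+y_m)+(u_0+\sum_{i<m}y_i)$.
\end{enumerate}

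Two further corrections.

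First, the vectors $\frac{v_j-v_0}{n-1}$ are lattice vectors only if $n-1$ divides every $\ell_{0j}$. That is exactly the case $P=v_0+(n-1)Q$ covered by Ogata--Nakagawa. So the symmetric grid $\{\mu:(n-1)\mu\in\ZZ^{n+1}_{\ge0}\}$ is not available in general. You must instead use steps $1/\ell_{0j}$ along the edges from a single base vertex $v_0$, and leave $\mu_0=1-\sum_{j\ge1}\mu_j$ unrounded. With that correction and $\lambda_0$ maximal, your rounding does settle $h\ge 3$; this is essentially the paper's Proposition~\ref{r>=3 generalization}.

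Second, the statement concerns a toric variety, so you must record the translation the paper uses. By Fujino, $X$ and $L$ come from a lattice simplex $P$. The intersection number $L\cdot C$ equals the lattice length of the edge corresponding to $C$, so $L(P)\ge n-1$. Projective normality of $L$ is integral closedness of $P$. The corollary is then exactly Theorem~\ref{thm: main thm 1}.
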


In \cite{GonZhu2022}, González and the third author introduced an invariant $\Gamma_P$ associated with $P$ to study the k-jet ampleness of line bundles on toric varieties. The invariant $\Gamma_P$ is a nonnegative rational number that measures the singularity of $P$: the more singular $P$ is, the larger $\Gamma_P$ becomes. In general, $\Gamma_P\le n-2$ whenever $n\ge 2$.  

The same line of reasoning as in the proof of Theorem \ref{thm: main thm 1} yields a more refined statement when $P$ is ``sufficiently singular".
\begin{theorem}\label{thm: main thm 2}
Let $P$ be an $n$-dimensional lattice simplex in $M_{\RR}$. Assume that either
\begin{itemize}
    \item $n=2$, or
    \item $n\ge 3$ and $\Gamma_P\ge (\frac{2}{3})n-1$.
\end{itemize}
If $L(P)\ge \Gamma_P+1$, then $P$ is integrally closed.
\end{theorem}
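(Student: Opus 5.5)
The plan is to treat $n=2$ separately and, for $n\ge 3$, to argue at a single well-chosen vertex of $P$, using $\Gamma_P$ to control the fractional lattice points in that vertex's cone. For $n=2$ there is nothing to prove: every lattice polygon is integrally closed. This follows since every lattice polygon has a unimodular triangulation, and unimodular simplices are integrally closed. So the hypotheses $L(P)\ge\Gamma_P+1$ and the bound on $\Gamma_P$ only matter for $n\ge 3$.

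\textbf{Setup.} Write $P=\conv{v_0,\dots,v_n}$. For a vertex $v_i$, let $w_{ij}$ ($j\ne i$) be the primitive vectors along the edges at $v_i$, so $v_j=v_i+\ell_{ij}w_{ij}$ with $\ell_{ij}\ge L(P)$. I recall $\Gamma_P$ from \cite{GonZhu2022} in the following form, and will check it against the paper's definition. Every lattice point of the semi-open parallelepiped $\{\sum_{j\ne i} a_j w_{ij}: 0\le a_j<1\}$ satisfies $\sum a_j\le \Gamma_P+1$, for every vertex $v_i$.

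\textbf{Reduction.} It suffices to show: for $k\ge 2$ and $m\in kP\cap M$, there is $p\in P\cap M$ with $m-p\in (k-1)P$. Write $m=\sum_j\lambda_j v_j$ with $\lambda_j\ge0$ and $\sum\lambda_j=k$. If some $\lambda_i\ge1$, take $p=v_i$. Hence we may assume all $\lambda_j<1$, which forces $k\le n$.

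\textbf{Construction of $p$.} Fix a vertex $v_i$, to be chosen below. Then
\[
m=k v_i+\sum_{j\ne i} c_j w_{ij},\qquad c_j=\lambda_j\ell_{ij}.
\]
Since $m-kv_i\in M$, splitting each $c_j=\lfloor c_j\rfloor+a_j$ makes $u=\sum a_j w_{ij}$ a lattice point of the parallelepiped, so $\sum a_j\le\Gamma_P+1\le L(P)$. Set $p=v_i+u$. In barycentric terms, $p\in P$ iff $\sum_{j\ne i} a_j/\ell_{ij}\le 1$, which holds because $\ell_{ij}\ge L(P)\ge\sum a_j$. Next, $m-p=(k-1)v_i+\sum\lfloor c_j\rfloor w_{ij}$ lies in $(k-1)P$ iff
\[
\sum_{j\ne i}\lfloor c_j\rfloor/\ell_{ij}\le k-1,
\]
equivalently
\[
\sum_{j\ne i} a_j/\ell_{ij}\ \ge\ 1-\lambda_i .
\]
When this fails, I would modify $p$ by trading integer steps: replace $u$ by $u+w_{ij}$ for suitable $j$ with $\lfloor c_j\rfloor\ge1$. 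The same computation, with the slack $L(P)-\sum a_j\ge L(P)-\Gamma_P-1\ge0$, keeps $p\in P$. Each such move increases $\sum_j(\text{coefficient of }w_{ij}\text{ in }p-v_i)/\ell_{ij}$ by at least $1/\ell_{ij}$. We need to push this sum up to at least $1-\lambda_i$ while keeping it at most $1$.

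\textbf{Main obstacle.} The main obstacle is showing the vertex $v_i$ can be chosen so this balancing succeeds: $p-v_i$ must have total barycentric weight in $[1-\lambda_i,1]$, where the available increments are multiples of $1/\ell_{ij}$ bounded by $\lfloor c_j\rfloor$.

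First attempt. Choose $i$ with $\lambda_i$ maximal, so $\lambda_i\ge k/(n+1)$ and the target interval has length $\lambda_i$. Then do a counting argument. The total available weight is $\sum_{j\ne i}\lfloor c_j\rfloor/\ell_{ij}\ge k-\lambda_i-\sum a_j/\ell_{ij}$. Using $\sum a_j\le\Gamma_P+1\le L(P)$ and $\ell_{ij}\ge L(P)$, a greedy choice of increments overshoots $1$ by at most the largest step. I expect that closing the remaining gap is exactly where $\Gamma_P\ge\frac23 n-1$ enters. It gives $L(P)\ge\frac23 n$, which should make the steps $1/\ell_{ij}\le\frac{3}{2n}$ small compared with $\lambda_i\ge k/(n+1)$ when $k\ge2$.

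Degenerate fallback. If this counting is too tight, I would consider the degenerate configurations, where few $\lfloor c_j\rfloor$ are nonzero, separately. In those cases the fractional parts $a_j$ are large, so $u$ alone already supplies the weight $1-\lambda_i$.
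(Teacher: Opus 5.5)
Your $n=2$ argument is fine; the paper instead notes $\Gamma_P=0$ and invokes Theorem~\ref{thm: main thm 1}.

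\textbf{The gap.} The membership $p=v_i+u\in P$ rests on the inequality $\sum_j a_j\le\Gamma_P+1$ for the fractional part $u$, and $\Gamma_P$ does not give this. By definition $\Gamma_{C(P,v_i)}$ bounds $\mathrm{ht}(u)-\mathrm{ord}(u)$. So for a lattice point $u$ of the semi-open parallelepiped you only get $\sum_j a_j\le\Gamma_P+\mathrm{ord}(u)$. The version with $+1$ holds only for minimal Hilbert basis elements, where $\mathrm{ord}=1$; that is exactly Lemma~\ref{small norm}. For example, the cone spanned by $(0,1)$ and $(q,-1)$ in $\ZZ^2$, $q\ge3$, has $\Gamma=0$, yet $(q-1,0)=\frac{q-1}{q}\big((0,1)+(q,-1)\big)$ has height $2-\frac2q>1$ (its order is $q-1$).

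Write $|y|=\sum_j b_j/\ell_{ij}$ for $y=\sum_j b_jw_{ij}$. A priori you only know $|u|<\sum_j 1/\ell_{ij}\le n/(\Gamma_P+1)\le\frac32$. Your repair moves only add $w_{ij}$'s, so they can never bring $p$ back into $P$.

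This failure really occurs under the hypotheses. Take:
\begin{itemize}
\item $v_0=0$ and $v_1,\dots,v_{12}$ a basis of $\RR^{12}$;
\item $M$ generated by the $v_j$, $\frac18(v_1+\dots+v_8+7v_9)$ and $\frac13(v_{10}+v_{11}+v_{12})$;
\item $P=8\,\conv{v_0,\dots,v_{12}}$.
\end{itemize}
One checks that all edges of $\conv{v_0,\dots,v_{12}}$ are primitive, so $L(P)=8$, and that $\Gamma_P=7=\frac23n-1$, attained at $v_{10},v_{11},v_{12}$. The point $m=\frac78(v_1+\dots+v_8)+\frac18v_9+\frac23(v_{10}+v_{11}+v_{12})$ lies in $2P\cap M$. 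Its unique maximal coefficient is $\lambda_0=\frac{55}{64}<1$, all $\lfloor c_j\rfloor=0$, and $|u|=\frac{73}{64}>1$. So your $p$ is not in $P$ and no move is available. The structural problem is that you always place the entire fractional part in the single summand $p\in P$.

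\textbf{The misidentified obstacle.} The balancing step you call the main obstacle is routine. Since $\lfloor c_j\rfloor/\ell_{ij}\le\lambda_j\le\lambda_i$, adding whole blocks greedily cannot jump over a target interval of length $\lambda_i$. This is precisely Lemma~\ref{harness}, and no size condition on $L(P)$ is needed for it.

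\textbf{How the paper places the fractional part.} The hypotheses are really needed for placing $u$, through the bound $|u|<\frac32$ (this is where $\Gamma_P\ge\frac23n-1$ enters).
\begin{itemize}
\item For $k\ge3$: put $u$ into the $(k-1)P$ summand, which has room since $|u|<\frac32\le k-1$. Let $p=v_i+\sum_{j\notin I}\lfloor c_j\rfloor w_{ij}$, with $I$ chosen by Lemma~\ref{harness}.
\item For $k=2$: write $u=y_1+\dots+y_s$ with the $y_t$ in the minimal Hilbert basis of $C(P,v_i)$. Each satisfies $|y_t|\le1$ by Lemma~\ref{small norm}.
\begin{itemize}
\item If every $|y_t|\le 2-|u|$, Lemma~\ref{harness} splits the $y_t$ into two groups of weight at most $1$.
\item Otherwise some $|y_s|>2-|u|$, and $|u|<\frac32$ forces $\sum_{t\ne s}|y_t|\le1$.
\end{itemize}
\end{itemize}
In the $k=2$ case the fractional part is shared between the two copies of $P$. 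The integer blocks are then distributed by Lemma~\ref{harness}, with $a,a'$ the weights of the two halves.
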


\textbf{Acknowledgments}.
During the preparation of the article, L.S. was partially supported by NSFC grants No.~12471043, No.~12371063 and  Guangdong Basic and Applied Basic Research Foundation No.~ 2025A1515012258, and Z.Z. was partially supported by NSFC grant No.~12101423. 
\vspace{2mm}

\section{Preliminaries}
In this section, we introduce some definitions and fix notation.

Let $P=\conv{u_0,\dots,u_m}$ be a lattice polytope in $M_{\RR}$, with vertices $u_0,u_1,\ldots,u_m$. 

\begin{definition}\label{def: lattice length}
    For any pair of distinct vertices $u_i, u_j$ of $P$, let $e_{ij}=\{\lambda u_i+(1-\lambda)u_j\ |\ 0\le \lambda\le 1\}$ denote the line segment joining $u_i$ and $u_j$. The  \emph{lattice length} of $e_{ij}$ is defined by \[l_{ij}:=\#(e_{ij}\cap M)-1.\] We refer to $l_{ij}$ as the lattice length of $e_{ij}$, regardless of whether $e_{ij}$ is an edge of $P$. The \emph{lattice length} of $P$ is defined by 
    \[
        L(P):=\min \{l_{ij}\mid e_{ij}\text{ is an edge of } P\}.
    \]
\end{definition}

For $i\ne j$, set 
\[u_{ij}:=u_j-u_i \text{ and }\hat{u}_{ij}:=\frac{u_{ij}}{l_{ij}}.\]
Thus $\hat{u}_{ij}$ is the primitive lattice vector in the direction from $u_i$ to $u_j$.

For each vertex $u_i$ of $P$, let $u_{j_1},\cdots,u_{j_k}$ be the vertices adjacent to $u_i$. We define $C(P,u_i)$ to be the cone generated by $u_{ij_1},\cdots, u_{ij_k}$. Equivalently, $C(P, u_i)$ is the tangent cone of $P$ at $u_i$, translated to the origin. Its primitive ray generators are $\hat{u}_{ij_1},\cdots,\hat{u}_{ij_k}$.

Next we define the nonnegative invariant $\Gamma_P$ associated to the lattice polytope $P$. 

\begin{definition}
Let $Q$ be a strictly convex rational polyhedral cone in $M_{\RR}$, and let $w_1,\cdots,w_s$ be the primitive ray generators. Let $\beta_1,\cdots,\beta_t$ be the Hilbert basis of the affine semigroup $Q\cap M$. Every $u\in Q\cap M$ admits an expression in terms of either the ray generators $w_i$ or the Hilbert basis elements $\beta_j$. Define
\[\height{u}=\max\{\sum_{i=1}^s a_i\ |\ u=\sum_{i=1}^s a_i w_i \text{ and } a_i\in \mathbb{R}_{\ge 0} \text{ for all } i\}\]
and
    \[\ord{u}=\max\{\sum_{j=1}^t \lambda_j\mid u=\sum_{j=1}^t \lambda_j \beta_j\text{ and } \lambda_j\in \mathbb{Z}_{\ge 0} \text{ for all } j\}.\]
    Let $S_Q=\{\sum_{i=1}^s a_i w_i\ |\ 0\le a_i<1 \text{ for all } i\}$. We define
    \[
    \Gamma_Q:=\max\{\height{u}-\ord{u}\mid u\in S_Q\cap M\}.
    \]
\end{definition}

\begin{definition}
Given a lattice polytope $P$ with vertices $u_0, \ldots, u_m$, define
     \[\Gamma_P:=\max\{\Gamma_{C(P,u_i)}\mid 0\le i\le m\}.\]
\end{definition}
We will use the following bound.
\begin{lemma}[{\cite[Lemma 2.6]{GonZhu2022}}]\label{Gamma}
    For any lattice polytope $P$ of dimension $n\ge 2$, we have $0\le \Gamma_P\le n-2$. Furthermore, if $P$ is smooth, then $\Gamma_P=0$.
\end{lemma}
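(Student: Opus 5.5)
We need to prove Lemma 2.6 from GonZhu: $0\le \Gamma_P\le n-2$ for $n\ge2$, and $\Gamma_P=0$ when $P$ is smooth. The bound is local, so it suffices to prove $0\le\Gamma_Q\le n-2$ for every cone $Q=C(P,u_i)$, with $\Gamma_Q=0$ when $Q$ is smooth.

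\emph{Nonnegativity.} The point $u=0$ lies in $S_Q\cap M$ and satisfies $\height{0}=0=\ord{0}$. Hence $\Gamma_Q\ge 0$.

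\emph{Smooth case.} If $Q$ is smooth, then $w_1,\dots,w_n$ form a lattice basis and are exactly the Hilbert basis. A lattice point $u=\sum a_iw_i$ with $0\le a_i<1$ must have all $a_i\in\ZZ$, so $u=0$. Thus $S_Q\cap M=\{0\}$ and $\Gamma_Q=0$.

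\emph{Upper bound.} Fix $u\in S_Q\cap M$ with $u\neq 0$; we must show $\height{u}-\ord{u}\le n-2$.

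First, $\ord{u}\ge1$, since $u$ is a nonzero element of the semigroup and hence a nonnegative integer combination of Hilbert basis elements with positive total.

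Next we bound $\height{u}$. By Carathéodory, any representation $u=\sum a_iw_i$ with $a_i\ge0$ that maximizes $\sum a_i$ can be taken to be a vertex of the feasible polytope $\{a\ge0: \sum a_iw_i=u\}$. Such a vertex has support on linearly independent rays, hence at most $n$ rays.

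Now choose a simplicial subcone $\sigma$ spanned by linearly independent rays $w_{i_1},\dots,w_{i_k}$ (with $k\le n$) that contains the support of such a maximizer. Write $u=\sum_j b_jw_{i_j}$ with $b_j\ge0$ and $\sum_j b_j=\height{u}$.

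Take integer parts: set $v=\sum_j\lfloor b_j\rfloor w_{i_j}$ and $u'=u-v$. Then $u'$ is a lattice point in the fundamental parallelepiped of $\sigma$, so $u'=\sum_j c_jw_{i_j}$ with $c_j\in[0,1)$ and $\sum_j c_j<k\le n$.

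Each primitive $w_i$ is itself a Hilbert basis element, because a primitive ray generator is irreducible in the semigroup. Therefore
\[
\ord{u}\ge \sum_j\lfloor b_j\rfloor+\ord{u'}.
\]
Subtracting from $\height{u}=\sum_j\lfloor b_j\rfloor+\sum_j c_j$ gives
\[
\height{u}-\ord{u}\le \sum_j c_j-\ord{u'}.
\]

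\emph{The main step.} It remains to show $\sum_j c_j-\ord{u'}\le n-2$ for a nonzero lattice point $u'$ in the half-open parallelepiped of a simplicial cone of dimension $k\le n$. If $u'=0$ the difference is $0$. Otherwise $\ord{u'}\ge1$, so the difference is less than $n-1$. This alone is not enough, since the required bound is $n-2$, not something strictly less than $n-1$.

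The refinement uses the involution $u'\mapsto u''=\sum_j(1-c_j)w_{i_j}$ over those $j$ with $c_j>0$, which is also a lattice point. Since $u'+u''=\sum_{c_j>0}w_{i_j}$, we get
\[
\sum_j c_j+\sum_j(1-c_j)=\#\{j:c_j>0\}\le k\le n.
\]

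We now split on the size of $\sum_j c_j$.

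If $\sum_j c_j\le n-1$, then $\sum_j c_j-\ord{u'}\le (n-1)-1=n-2$.

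If $\sum_j c_j>n-1$, then all $k=n$ coefficients are positive and $u''$ has height-sum less than $1$. Writing $u'=\sum_j w_{i_j}-u''$ does not directly help. Instead I would argue that $u'-w_{i_j}$ fails to lie in the cone, but that $u'$ splits as a sum of two nonzero lattice points of the cone. The natural candidate is $u'=u''' + x$, where $x$ is a lattice point near a face. Concretely, I would try to show $\ord{u'}\ge2$ by exhibiting a Hilbert basis element $\beta$ with $u'-\beta\in Q\cap M\setminus\{0\}$. For this, take $\beta$ to be any Hilbert basis element in the parallelepiped below $u'$ coordinatewise in the simplicial coordinates; such an element exists because $u'$ has large coordinates, all of them exceeding $0$ and summing to more than $n-1$.

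This last step, producing a second summand when $\sum_j c_j>n-1$, is where I expect the main difficulty. If the simplicial-coordinate argument fails, I would fall back on the sharper estimate $\height{u}\le n-1$, combined with the observation that equality-type configurations force $\ord{u}\ge2$.
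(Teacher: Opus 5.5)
Your reduction is sound: nonnegativity, the smooth case, and the choice of a height-maximizing representation supported on linearly independent rays $w_{i_1},\dots,w_{i_k}$ are all correct. So is the estimate $\height{u}-\ord{u}\le\sum_j c_j-\ord{u'}$, which uses that primitive ray generators are Hilbert basis elements and that $\mathrm{ord}$ is superadditive. The cases $u'=0$, $k<n$, and $\sum_j c_j\le n-1$ are also fine. (The paper does not prove this lemma; it quotes it from \cite{GonZhu2022}.)

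\textbf{The gap.} The remaining case, $k=n$ and $\sum_j c_j>n-1$, is the whole content of the bound, and there your argument is missing. You need $\ord{u'}\ge 2$, but you only assert that some nonzero lattice point of $\sigma$ other than $u'$ lies coordinatewise below $u'$ ``because $u'$ has large coordinates''. That is not a reason. You only know $c_j>\sum_l c_l-(n-1)$, which can be arbitrarily small. The obvious candidate $u''=\sum_j(1-c_j)w_{i_j}$ lies below $u'$ only when every $c_j\ge\frac12$. Your fallback is false outright: $\height{u}\le n-1$ already fails for $n=2$. For $Q=\mathrm{cone}((1,0),(-1,3))$, the point $u=(0,2)=\frac23(1,0)+\frac23(-1,3)\in S_Q\cap M$ has $\height{u}=\frac43>1$ (and $\ord{u}=2$ via $(0,1)+(0,1)$).

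\textbf{How to close it.} Use all multiples of $u''$, not only $u''$ itself.
\begin{enumerate}
\item Put $\Lambda=\bigoplus_j\ZZ w_{i_j}$ and let $N$ be the order of $u''$ in $M/\Lambda$. Write $1-c_j=a_j/N$ with integers $1\le a_j\le N-1$. Since $\sum_j(1-c_j)<1$, we get $\sum_j a_j\le N-1$.
\item For $0\le k\le N-1$ let $p_k=\sum_j\{ka_j/N\}\,w_{i_j}$ (fractional parts). This is the unique parallelepiped representative of the coset $ku''+\Lambda$.
\item Then $p_0=0$ and $p_{N-1}=u'$, because $u'\equiv-u''$ mod $\Lambda$. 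Since $u''$ has order $N$, we have $p_k\notin\{0,u'\}$ for $1\le k\le N-2$.
\item $p_k\le u'$ coordinatewise if and only if $ka_j\bmod N\le N-a_j$ for every $j$. For fixed $j$, exactly $a_j-\gcd(a_j,N)\le a_j-1$ residues $k$ violate this, and neither $k=0$ nor $k=N-1$ is among them.
\item Hence at most $\sum_j(a_j-1)\le N-1-n$ of the $N-2$ values $1\le k\le N-2$ are excluded. At least $n-1\ge1$ values remain.
\item For such a $k$, $u'=p_k+(u'-p_k)$ with both summands nonzero lattice points of $\sigma\subseteq Q$. So $\ord{u'}\ge2$ and $\sum_jc_j-\ord{u'}<n-2$.
\end{enumerate}
With this step inserted your argument is complete. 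It in fact bounds $\height{u}-\ord{u}$ for every $u\in Q\cap M$, not only for $u\in S_Q\cap M$.
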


\section{A Partition Lemma}
In this section, we present a useful combinatorial partition lemma that provides an effective method of decomposing points. In the following, for a nonnegative integer $l$, let $[l]=\{1, 2, \cdots, l\}$. 
\begin{lemma}\label{harness}
Let $1<r$ be a real number and let $x_0,x_1,\dots,x_l$ be nonnegative real numbers. Let $a$ and $a'$ be real numbers  such that $a\le  r-1, a'\le 1$, and  $$a+a'+\sum\limits^l_{i=0}x_i=r.$$ Assume that $x_i\le x_0$ for each $1\le i\le l$. Then there exists a subset $I$ of $[l]$ satisfying the property
\[a+\sum_{i\in I}x_i\le r-1 \text{ and } a'+\sum_{i\in [l]\backslash I}x_i\le 1.\]
\end{lemma}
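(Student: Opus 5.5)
The plan is to reduce the statement to the fact that a sequence of partial sums with small steps must land in any sufficiently long interval that it crosses. Set $S:=\sum_{i=1}^l x_i$. For $I\subseteq[l]$ we have $\sum_{i\in[l]\setminus I}x_i=S-\sum_{i\in I}x_i$. So the two required inequalities are together equivalent to the single condition
\[
\alpha\le \sum_{i\in I}x_i\le \beta,\qquad \text{where } \alpha:=a'+S-1,\quad \beta:=r-1-a .
\]
It therefore suffices to find a subset whose sum lies in $[\alpha,\beta]$.

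First I record the facts about this interval that come from the hypotheses.
\begin{itemize}
\item Its length is $\beta-\alpha=r-a-a'-S=x_0\ge 0$, by the relation $a+a'+\sum_{i=0}^l x_i=r$.
\item $\beta\ge 0$, because $a\le r-1$.
\item $\alpha\le S$, because $a'\le 1$.
\end{itemize}
Thus $[\alpha,\beta]$ has length $x_0$ and meets the interval $[0,S]$.

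I will only use the initial segments $I=[k]$, whose sums are the partial sums $s_k:=\sum_{i=1}^k x_i$ for $0\le k\le l$. These satisfy $s_0=0$, $s_l=S$, and are nondecreasing since all $x_i\ge0$. Their consecutive increments are $x_k\le x_0$.

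Let $k$ be the smallest index with $s_k\ge\alpha$; it exists because $s_l=S\ge\alpha$.
\begin{itemize}
\item If $k=0$, then $\alpha\le 0=s_0\le\beta$, so $I=\emptyset$ works.
\item If $k\ge1$, minimality gives $s_{k-1}<\alpha$. Hence $s_k=s_{k-1}+x_k<\alpha+x_0=\beta$. So $s_k\in[\alpha,\beta]$, and $I=[k]$ works.
\end{itemize}

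The only step needing care is recognizing that the interval length equals exactly $x_0$. This is precisely where the hypothesis $x_i\le x_0$ enters: a step of the partial sums can never jump over the target interval. Everything else is a direct rewriting of the hypotheses. The hypothesis $r>1$ is not needed for this argument.
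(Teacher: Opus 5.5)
Your proof is correct, but it takes a different route from the paper. The paper builds $I$ greedily. It scans $x_1,\dots,x_l$ in order and puts $x_{k+1}$ into $I$ whenever $a+\sum_{i\in I_k}x_i+x_{k+1}\le r-1$, and into the complement otherwise. At each step it checks by contradiction that at least one side still has room, since failure of both would force $x_{k+1}>x_0+\sum_{i>k+1}x_i$.

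You instead eliminate the complement at the outset. This turns the two inequalities into the single window condition $\sum_{i\in I}x_i\in[\alpha,\beta]$, a window of width exactly $x_0$. A single discrete intermediate-value argument on prefix sums then finishes, with no induction.

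Your route makes the role of the hypothesis transparent, since no step can exceed the window width. It also gives a slightly stronger conclusion: $I$ can always be taken to be an initial segment $[k]$, for any ordering of $x_1,\dots,x_l$. The paper's greedy set need not be an initial segment.

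On the other hand, the paper's stepwise contradiction uses only the weaker, order-dependent bound $x_{k+1}\le x_0+\sum_{i>k+1}x_i$. Your crossing step needs $x_k\le x_0$ for the crossing index itself.

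Your remark that $r>1$ is never used applies equally to the paper's argument.
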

\begin{remark}
In practice below, $r$ is an integer, and $a, a'\ge 0$. 
\end{remark}
\begin{proof}
First we observe that either $a+x_1\le r-1$ or $a'+x_1\le 1$. Indeed, if neither inequality holds, we have
\[a+a'+2x_1>(r-1)+1=r,\]
which implies
\[x_1>r-(a+a'+x_1)=\sum^l_{i=2} x_i+x_0\ge x_0,\]
which is a contradiction to the assumption. Thus if $a+x_1\le r-1$, we can take $I_1=\{1\}$; if $a+x_1>r-1$, then $a'+x_1\le 1$, and we can take $I_1=\emptyset$. 
 
Suppose for some $1\le k\le l-1$, we have found a subset $I_k\subseteq [k]$ satisfying
\[a+\sum_{i\in I_k} x_i\le r-1, \text{ and } a'+\sum_{i\in [k]\backslash I_k} x_i\le 1.\] 
We claim that either
\[a+\paren{\sum_{i\in I_k} x_i}+x_{k+1}\le r-1 \text{ or }  a'+\paren{\sum_{i\in [k]\backslash I_k}x_i}+x_{k+1}\le 1,\]
because otherwise, it follows that
\[a+a'+\paren{\sum^k_{i=1} x_i}+2x_{k+1}>(r-1)+1=r,\]
implying
\[x_{k+1}>r-(a+a'+\sum^{k+1}_{i=1}x_i)=\paren{\sum^l_{i=k+2} x_i}+x_0\ge x_0,\]
which is a contradiction to the assumption.

Therefore if $a+\paren{\sum_{i\in I_k} x_i}+x_{k+1}\le r-1$, we can take $I_{k+1}=I_k\cup \{k+1\}$; and if $a+\paren{\sum_{i\in I_k} x_i}+x_{k+1}> r-1$, then it occurs that $a'+\paren{\sum_{i\in [k]\backslash I_k} x_i}+x_{k+1}\le 1$ and we can take $I_{k+1}=I_k$. 

So after $l$ steps, we reach $I=I_l\subseteq [l]$ satisfying the desired property.
\end{proof}

\section{Proof of the Main Theorems}
In this section, we shall prove Theorems \ref{thm: main thm 1} and \ref{thm: main thm 2}. Theorem \ref{thm: main thm 1} follows from Propositions \ref{r>=3} and \ref{decomp for r=2}. The case 
$r\ge 3$ is relatively straightforward, and we obtain a result stronger than what is required for integral closedness, while the case $r=2$ involves Hilbert bases of affine semigroups and will be treated separately. 

The core idea is that every lattice point in 
$rP$ admits a unique representation as a convex combination of vertices when $P$ is a simplex. Lemma \ref{harness} will be applied repeatedly to partition the combination coefficients into two suitable subsets, thereby inducing a decomposition of the sum over lattice points. 

\begin{proposition}\label{r>=3}
Let $P$ be a lattice simplex in $M_{\mathbb{R}}$ with $L(P)\ge n-1$. We have $rP=(r-1)P+P\cap M$ for all $r\ge 3$.
\end{proposition}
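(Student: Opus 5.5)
The plan is to work with barycentric coordinates and peel off one lattice point of $P$ along the edges at a well-chosen vertex. Let $P=\conv{u_0,\dots,u_n}$ and let $u\in rP\cap M$ with $r\ge 3$. Write $u=\sum_{i=0}^n\lambda_iu_i$ with $\lambda_i\ge 0$ and $\sum\lambda_i=r$; this representation is unique since $P$ is a simplex. After relabeling, assume $\lambda_0=\max_i\lambda_i$, so $\lambda_0\ge r/(n+1)\ge 3/(n+1)$. For $n\ge 2$ this gives $\lambda_0\ge 1/(n-1)\ge 1/l_{0i}$ for every $i$, because $L(P)\ge n-1$. Now
\[u-ru_0=\sum_{i\ge1}\lambda_iu_{0i}=\sum_{i\ge 1}\lambda_il_{0i}\,\hat u_{0i}.\]
Split each $\lambda_i=\alpha_i+\beta_i$, where $\alpha_il_{0i}=\lfloor\lambda_il_{0i}\rfloor$ and $0\le\beta_i<1/l_{0i}\le 1/(n-1)$. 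Then $\sum\alpha_iu_{0i}\in M$, so $z:=u_0+\sum_{i\ge1}\beta_iu_{0i}$ is a lattice point, and $z\in bP$ with $b:=\sum\beta_i<n/(n-1)$.

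The goal is a lattice point $w=z+\sum_{i\ge1}k_i\hat u_{0i}$ with $k_i\in\ZZ_{\ge0}$ and $k_i\le\alpha_il_{0i}$ such that $w\in P$ and $u-w\in(r-1)P$. In barycentric terms, $w$ has coefficient $\beta_i+k_i/l_{0i}$ at $u_i$ for $i\ge 1$, and coefficient $1-s$ at $u_0$, where $s:=b+\sum k_i/l_{0i}$. Hence $u-w$ has coefficient $\alpha_i-k_i/l_{0i}\ge0$ at $u_i$ for $i\ge1$, and $\lambda_0-1+s$ at $u_0$. So everything reduces to finding integers $k_i$ with
\[1-\lambda_0\le s\le 1 .\]
When $b\le 1$, start from $s=b$ and add unit steps $1/l_{0i}$, drawn from the available budget $\sum\alpha_i=r-\lambda_0-b$. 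Each step has size at most $1/(n-1)\le\lambda_0$, which is the width of the target window. Since $r\ge3$ and $\lambda_0\le r/1$ is the largest coefficient, the budget is enough to reach $1-\lambda_0$, and the walk cannot jump over the window. This settles the case $b\le1$.

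The main obstacle is the case $1<b<n/(n-1)$, where $z\notin P$, so $z$ itself is too big to serve as the base point. Here I would use the partition Lemma \ref{harness}. The plan is to view the small fractional coefficients $\beta_i$ (each $<1/(n-1)$), together with the units $1/l_{0i}$ composing the $\alpha_i$, as the items $x_i$, with $x_0$ an item at least as large as all of them (for instance $\lambda_0$, or a piece of it). Take $a'$ to be the part already committed to the $P$-summand and $a$ the part committed to the $(r-1)P$-summand. The lemma then yields a split of the coefficient mass into a portion of total at most $1$ and a portion of total at most $r-1$.

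The delicate point is lattice integrality: a split of the $\beta_i$ does not in general give lattice points. To handle this, I would first try re-basing at another vertex $u_j$ with $\beta_j>0$. In that coordinate system the fractional parts change; the $u_0$ and $u_j$ contributions are redistributed along the edge $e_{0j}$, whose lattice length is at least $n-1$. I expect that, for a suitable choice of base vertex, the new fractional mass $b$ drops to at most $1$, returning us to the first case. Lemma \ref{harness} would be used to justify the choice of base vertex, or to certify directly that the required integer $k_i$ exist.
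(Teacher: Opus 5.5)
There is a genuine gap, and it comes from where you put the fractional parts. Your case $b\le 1$ is handled correctly. But you require the $P$-summand to be $w=z+\sum_i k_i\hat u_{0i}$ with $k_i\ge 0$, so it must absorb all of $\sum_i\beta_iu_{0i}$. Then $w\in P$ forces $b\le 1$. In the case $1<b<n/(n-1)$, which does occur, no admissible $w$ exists at all, and neither proposed repair is actually carried out.

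The re-basing hope is in fact false. Take $n=3$ and $P=\conv{u_0,u_1,u_2,u_3}$ with $u_0=0$, $u_1=(2,0,0)$, $u_2=(0,2,0)$, $u_3=(-2,-2,10)$; all six edges have lattice length $2=n-1$. Take $r=3$ and $u=(0,0,4)=\frac95u_0+\frac25(u_1+u_2+u_3)$. The fractional mass is $b=6/5$ at $u_0$ and $11/10$ at each of $u_1,u_2,u_3$, so no base vertex returns you to the case $b\le 1$. Yet $u=u_0+u$ with $u\in 2P$ is a valid decomposition.

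Your scheme also only sees lattice points: you need $u\in M$ to make $z$ a lattice point. The proposition, however, asserts $rP=(r-1)P+P\cap M$ for every real point of $rP$.

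The missing idea is to send the fractional parts to the \emph{other} summand. The $(r-1)P$-summand need not be a lattice point and has room $r-1\ge 2$. Meanwhile $b<\sum_i 1/l_{0i}\le n/(n-1)\le 2$; this is exactly where $r\ge 3$ is used. So take the $P$-summand of the form $u_0+\sum_i k_i\hat u_{0i}$ with $0\le k_i\le \alpha_i l_{0i}$. This is a lattice point for every $u$, real or not, and only the integral steps need distributing.

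The paper does this with Lemma \ref{harness}, applied with $a=\sum_i\beta_i$, $a'=0$, $x_0=\lambda_0$ and items $\alpha_i\le\lambda_i\le\lambda_0$. That yields $I$ with $\sum_i\beta_i+\sum_{i\in I}\alpha_i\le r-1$ and $\sum_{i\notin I}\alpha_i\le 1$.

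Your own greedy walk also works if you start it at $w=u_0$ (that is, $s=0$) instead of at $w=z$:
\begin{itemize}
\item the target window is still $[1-\lambda_0,1]$;
\item each step is still at most $1/(n-1)\le\lambda_0$;
\item the budget condition $r-\lambda_0-b\ge 1-\lambda_0$ becomes $b\le r-1$ rather than $b\le 1$, and this always holds.
\end{itemize}
With that single change, your easy-case argument covers every point of $rP$.
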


\begin{remark}
Note that $rP=(r-1)P+P\cap M$ implies that $rP\cap M=(r-1)P\cap M+P\cap M$.
It is also important to note that Proposition \ref{r>=3} is false for $r=2$. Consider the following example. Let $P$ be the 3-simplex with $u_0=(0, 0, 0), u_1=(2, 0, 0), u_2=(0, 2, 0)$ and $u_3=(0, 0, 2)$. Then 
\begin{eqnarray*}
P\cap M&=&\{(0, 0, 0), (2, 0, 0), (0, 2, 0), (0, 0, 2), (1, 1, 0), \\
&&(1, 0, 1), (0, 1, 1), (1, 0, 0), (0, 1, 0), (0, 0, 1)\}.
\end{eqnarray*}
Let $x=(\frac{14}{15}, \frac{14}{15}, \frac{14}{15})\in 2P$. But for any $y\in P\cap M$, $x-y\notin P$. 
\end{remark}

Proposition \ref{r>=3} is a direct consequence of the following result by taking $m=n$.

\begin{proposition}
\label{r>=3 generalization}
Let $P$ be a lattice polytope of dimension $n$, with $m+1$ vertices. 
Suppose $l_{ij}\ge n-1$ for any $i\neq j$. Then  we have 
\[rP=(r-1)P+P\cap M\]
for $r\ge \lceil\frac{m}{n-1}\rceil+1$.
\end{proposition}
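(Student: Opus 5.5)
Take $x\in rP$ and write it as a convex combination of the vertices, $x=r\sum_{i=0}^m \lambda_i u_i$ with $\lambda_i\ge 0$ and $\sum\lambda_i=1$. Set $x_i:=r\lambda_i$, so that $\sum_{i=0}^m x_i=r$. Reindex so that $x_0$ is the largest coefficient. The plan is to find a lattice point $y\in P\cap M$ with $x-y\in(r-1)P$. We look for $y$ on the edges through $u_0$, of the form
\[
y=u_0+\sum_{i=1}^m c_i\,\hat u_{0i},\qquad c_i\in\ZZ_{\ge 0},
\]
chosen so that $y$ still lies in $P$.

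Writing $\hat u_{0i}=(u_i-u_0)/l_{0i}$, we get $y=(1-\sum_i c_i/l_{0i})u_0+\sum_i (c_i/l_{0i})u_i$. Hence $y\in P$ whenever $\sum_{i\ge1} c_i/l_{0i}\le 1$, and since every $l_{0i}\ge n-1$ it suffices that $\sum_i c_i\le n-1$. The difference is
\[
x-y=\Bigl(x_0-1+\sum_i \tfrac{c_i}{l_{0i}}\Bigr)u_0+\sum_{i\ge1}\Bigl(x_i-\tfrac{c_i}{l_{0i}}\Bigr)u_i,
\]
whose coefficients sum to $r-1$. So $x-y\in(r-1)P$ provided all these coefficients are nonnegative. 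This amounts to two conditions:
\[
c_i\le l_{0i}x_i \quad\text{for } i\ge1, \qquad x_0-1+\sum_i c_i/l_{0i}\ge 0 .
\]

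Choose $c_i=\lfloor x_i\rfloor$ (or $\lfloor l_{0i}x_i\rfloor$, truncated), so that the first condition holds automatically. We need two things.
\begin{enumerate}
\item[(a)] $\sum_i c_i/l_{0i}\le 1$, so that $y\in P$.
\item[(b)] $x_0+\sum_i c_i/l_{0i}\ge 1$.
\end{enumerate}
If $x_0\ge 1$, take all $c_i=0$, i.e.\ $y=u_0$; then both conditions are trivial. If $x_0<1$, then every $x_i\le x_0<1$. Since $\sum x_i=r$ and the maximum is $x_0$, we get $(m+1)x_0\ge r$. Now choose $c_i$ with $c_i/l_{0i}\le x_i$, making $\sum_i c_i/l_{0i}$ at least $1-x_0$ but at most $1$. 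Granularity is the issue: the amounts $c_i/l_{0i}$ move in steps of $1/l_{0i}\le 1/(n-1)$. The total available is $\sum_{i\ge1}x_i=r-x_0$. Rounding each $x_i$ down to a multiple of $1/l_{0i}$ loses less than $1/l_{0i}\le 1/(n-1)$ per index, so less than $m/(n-1)$ in total. Thus
\[
\sum_i \frac{\lfloor l_{0i}x_i\rfloor}{l_{0i}} > r-x_0-\frac{m}{n-1}\ge 1-x_0,
\]
using $r\ge \lceil m/(n-1)\rceil+1$. Then greedily decrease the $c_i$ one step at a time until the sum first drops to at most $1$. Each step has size at most $1/(n-1)\le 1$, and $x_0\ge0$, so we can stop at a value in $[1-x_0,1]$ unless the steps overshoot. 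This is the delicate point, handled next.

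The main obstacle is exactly this greedy landing: ensuring there is an integer choice whose sum lies in the window $[\max(0,1-x_0),\,1]$. The window has width $x_0$, while the steps have size $1/l_{0i}$, which may exceed $x_0$. To handle it, I would use Lemma \ref{harness}-style reasoning on the individual pieces rather than on a single large step. Alternatively, allow $c_i$ to be capped at $\lceil l_{0i}x_i\rceil$ only when needed. A cleaner route is the following. Strict inequality $>$ in the rounding estimate gives slack. Decrease the $c_i$ unit by unit; the sum goes from $>1-x_0$ down toward $0$. At the first moment it is $\le 1$, the previous value was $>1$, so the current value is $>1-1/l_{0i}\ge 1-1/(n-1)$. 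This is $\ge 1-x_0$ provided $x_0\ge 1/(n-1)$. That in turn follows from $x_0\ge r/(m+1)$ when $r$ is large enough. If it fails, I would instead order the reductions by choosing indices with $x_i$ small, so that the step size is bounded by the $x_i$ bound. This is where I expect the real work, and where the exact threshold $\lceil m/(n-1)\rceil+1$ must be used carefully.
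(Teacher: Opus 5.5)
Your setup is the paper's. You use the same candidate $y=u_0+\sum_i c_i\hat u_{0i}$ on the edges at the largest-coefficient vertex $u_0$, the same rounding $c_i=\lfloor l_{0i}x_i\rfloor$ (not $\lfloor x_i\rfloor$), and the same estimate $\sum_i (x_i-c_i/l_{0i})<\sum_i 1/l_{0i}\le m/(n-1)\le r-1$. Your conditions (a) and (b) are exactly the two inequalities the paper obtains from Lemma \ref{harness} with $a=\sum_i\tilde x_i$ and $a'=0$.

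As written, however, the proof stops at the decisive step. You leave open whether the decreasing sum can jump over the window $[1-x_0,1]$. You assert $x_0\ge 1/(n-1)$ only ``when $r$ is large enough'' without checking the given threshold, and you fall back on an unspecified alternative ordering. That is a gap in the argument, though not a defect of the approach.

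The missing observation is that the feared situation, a step $1/l_{0i}$ larger than $x_0$, never occurs. You only decrement an index with $c_i\ge 1$, and for such an index
\[
\frac{1}{l_{0i}}\le \frac{c_i}{l_{0i}}\le x_i\le x_0 .
\]
There are two cases.
\begin{itemize}
\item If the starting sum is already $\le 1$, your rounding estimate gives that it is $>r-x_0-m/(n-1)\ge 1-x_0$.
\item If the starting sum exceeds $1$, decrement one unit at a time. At the first decrement where the sum drops to $\le 1$, it is $>1-1/l_{0i}\ge 1-x_0$.
\end{itemize}
In both cases (a) and (b) hold, and no separate treatment of $x_0\ge 1$ is needed.

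Your own route also closes. Since $x_0$ is the largest of $m+1$ numbers summing to $r$,
\[
x_0\ge \frac{r}{m+1}\ge \frac{m/(n-1)+1}{m+1}=\frac{m+n-1}{(n-1)(m+1)}\ge \frac{1}{n-1}\quad (n\ge 2),
\]
and $1/l_{0i}\le 1/(n-1)$.

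With this inserted, your unit-by-unit descent is a valid alternative to the paper's choice of a subset $I$ via Lemma \ref{harness}, where each $t_i$ is taken all or nothing. Both arguments rest on the same fact, that every removable piece is at most $x_0$. Yours is a discrete intermediate-value argument that bypasses the lemma. The paper's version packages the same greedy step into a lemma it reuses for $r=2$.
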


\begin{proof}
    Given $u\in rP$, put $u=\sum^m_{i=0}x_iu_i$ with $x_i\ge 0$ and $\sum^m_{i=0} x_i=r$. We can assume that $x_0$ is the largest among $\{x_i\}$.
    
For each $1\le i\le m$, there exists a unique non-negative integer $t_i$ such that
    \[\frac{t_i}{l_{0i}}\le x_i, \text{ while  }\frac{t_i+1}{l_{0i}}> x_i.\]
    Put $\tilde{x}_i=x_i-\frac{t_i}{l_{0i}}$. By assumption,
    \begin{equation}
    \label{eq: key formula 1}
          \sum_{i=1}^m \tilde{x}_i<\sum_{i=1}^m \frac{1}{l_{0i}}\le \frac{m}{n-1}\le r-1 .
    \end{equation}
    Applying Lemma \ref{harness} (by taking $a=\sum_{i=1}^m \tilde{x}_i$ and $a'=0$), we obtain a set $I\subseteq [m]$ satisfying the property that
\[\sum^m_{i=1}\tilde{x_i}+\sum_{i\in I}\frac{t_i}{l_{0i}}\le r-1, \text{ and } \sum_{i\in [m]\backslash I}\frac{t_i}{l_{0i}}\le 1.\]
Consequently, we can write $u=v_1+v_2$, where
\begin{eqnarray*}
v_1 &=& (r-1)u_0+\sum^m_{i=1}\tilde{x}_i(l_{0i}\hat{u}_{0i})+\sum_{i\in I}\frac{t_i}{l_{0i}}(l_{0i}\hat{u}_{0i})\in (r-1)P,\\
v_2 &=& u_0+\sum_{i\in [m]\backslash I}\frac{t_i}{l_{0i}}(l_{0i}\hat{u}_{0i})\in P\cap M.
\end{eqnarray*}
This finishes the proof.
\end{proof}

\begin{proposition}\label{decomp for r=2}
Let $P$ be a lattice simplex with $L(P)\ge n-1$. We have $$2P\cap M=P\cap M+P\cap M.$$
\end{proposition}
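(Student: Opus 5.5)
We write a lattice point $u\in 2P\cap M$ in barycentric form $u=\sum_{i=0}^n x_iu_i$ with $x_i\ge 0$ and $\sum x_i=2$; this representation is unique because $P$ is a simplex. We may assume $x_0$ is the largest coefficient. As in the proof of Proposition \ref{r>=3 generalization}, for each $1\le i\le n$ we set $t_i=\lfloor l_{0i}x_i\rfloor$ and $\tilde x_i=x_i-t_i/l_{0i}\in[0,1/l_{0i})$.

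\textbf{The gap at $r=2$.} The argument of Proposition \ref{r>=3 generalization} does not go through directly. With $m=n$ we only get $\sum_i\tilde x_i<n/(n-1)$, which can exceed $r-1=1$. The remark's example also shows that a decomposition of the whole real polytope $2P$ cannot exist, so integrality of $u$ must enter. The point $w:=u-u_0-\sum_i t_i\hat u_{0i}=\sum_{i=1}^n\tilde x_i\,l_{0i}\hat u_{0i}$ is a lattice point lying in the fundamental parallelepiped $S_Q$ of the cone $Q=C(P,u_0)$ with respect to the primitive generators $\hat u_{0i}$. Its height is
\[
\height{w}=\sum_i\tilde x_i l_{0i}<\sum_i 1=n .
\]

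\textbf{Using the Hilbert basis.} Since $w\in Q\cap M$, we write $w=\sum_j\lambda_j\beta_j$ as a nonnegative integer combination of Hilbert basis elements. Each Hilbert basis element $\beta$ of a simplicial cone lies in $S_Q$ or is a ray generator, so it has barycentric-type coordinates $\beta=\sum_i b_i\hat u_{0i}$ with $0\le b_i<1$, or equals some $\hat u_{0i}$. The plan is to treat the coordinates $\tilde x_i$ of $w$ as a sum of contributions from the individual $\beta_j$'s, plus the integer parts $t_i$. We then redistribute these lattice pieces between two copies of $P$. Concretely, we seek lattice points $v_1=u_0+\sum_i s_i\hat u_{0i}$ and $v_2=u_0+\sum_i s'_i\hat u_{0i}$ in $P$ with $v_1+v_2=u$. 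For $v_1\in P$ the condition is $\sum_i s_i/l_{0i}\le 1$, and similarly for $v_2$.

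\textbf{Applying Lemma \ref{harness}.} The pieces are the Hilbert basis summands $\beta_j$, each of normalized weight $\sum_i b_{ji}/l_{0i}\le \height{\beta_j}/(n-1)$, together with the unit steps $\hat u_{0i}$, each of weight $1/l_{0i}\le 1/(n-1)$. Their total weight is $2-x_0\le 2$. We apply Lemma \ref{harness} with $r=2$, $a=a'=0$, and $x_0$ the dominant coefficient. This requires checking that each piece weighs at most $x_0$, and here the largeness of $x_0$ helps: $x_0\ge 2/(n+1)$. We can also order the pieces and apply Lemma \ref{harness} to the integral parts first, as in Proposition \ref{r>=3 generalization}, keeping the fractional Hilbert-basis part together on one side.

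\textbf{Main obstacle.} The hard step is bounding the weight of a single Hilbert basis element $\beta\in S_Q$, so that the fractional part $w$ can be split into at most two subsets of weight at most $1$. Using $\ord{w}\ge \height{w}-\Gamma_Q\ge\height{w}-(n-2)$ from Lemma \ref{Gamma}, we can decompose $w$ into many Hilbert basis elements. We would first try to show that $w$ decomposes as $w=w_1+w_2$ with $\height{w_k}\le n-1\le l_{0i}$, so that each $w_k$ contributes weight at most $1$. If direct splitting fails, we would instead choose a different base vertex $u_i$ for the decomposition, for instance the one making the fractional residue smallest.
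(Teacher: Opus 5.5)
Your setup matches the paper's: unique barycentric coordinates, the reduction via $t_i$ and $\tilde x_i$, the Hilbert basis of $Q=C(P,u_0)$, and Lemma \ref{harness}. But the proposal stops at the step that carries the proof, and the mechanism you sketch for it does not work.

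Write $|v|=\sum_i c_i$ for $v=\sum_i c_iu_{0i}$; this is your ``weight''.
\begin{itemize}
\item Applying Lemma \ref{harness} to all pieces at once requires every Hilbert basis summand $\beta$ of $w$ to satisfy $|\beta|\le x_0$. The bound $x_0\ge 2/(n+1)$ gives no such control, and a priori you do not even know $|\beta|\le 1$.
\item Keeping $w$ together on one side fails, since $|w|=\sum_i\tilde x_i$ may exceed $1$, as you observe yourself. (Note also that $w$ should be $u-2u_0-\sum_i t_i\hat u_{0i}$.)
\item The order of operations is reversed. The paper first splits $w=\tilde v+\tilde v'$ into lattice points with $|\tilde v|,|\tilde v'|\le 1$. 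Only then does it apply Lemma \ref{harness} with $a=|\tilde v|$, $a'=|\tilde v'|$ to the integral pieces $t_i/l_{0i}\le x_i\le x_0$, where the total is $|w|+\sum_i t_i/l_{0i}+x_0=2$.
\end{itemize}

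Two ideas are missing for the split of $w$.

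\textbf{An upper bound on each Hilbert basis element.} What you need is this, not a lower bound on $\ord{w}$. Take $\beta\in S_Q$; ray generators are trivial and cannot occur in a decomposition of $w\in S_Q$ anyway. Since $\ord{\beta}=1$, the definition of $\Gamma_Q$ and Lemma \ref{Gamma} give $\height{\beta}\le\Gamma_Q+1\le n-1$. Hence $|\beta|\le\height{\beta}/(n-1)\le 1$, i.e.\ $u_0+\beta\in P$. This is Lemma \ref{small norm}.

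\textbf{A dichotomy.} Write $w=\beta_1+\dots+\beta_m$ and put $x_0'=2-|w|$, the $u_0$-coordinate of $2u_0+w$.
\begin{itemize}
\item If $|\beta_j|\le x_0'$ for all $j$, Lemma \ref{harness} with $r=2$ and $a=a'=0$ splits the $\beta_j$ into two groups of weight at most $1$.
\item If $|\beta_m|>x_0'$, use $|w|<\sum_i 1/l_{0i}\le n/(n-1)$. The remaining summands then have total weight
\[
|w|-|\beta_m|<2|w|-2<\frac{2}{n-1}\le 1 \quad\text{for } n\ge 3.
\]
For $n=2$, use $x_0'\ge x_0\ge 2/3$ instead. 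So both $u_0+\beta_m$ and $u_0+(w-\beta_m)$ lie in $P\cap M$.
\end{itemize}

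With these two points in place, no change of base vertex is needed.
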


\begin{lemma}\label{small norm}
Let $P$ be a lattice polytope. Suppose $L(P)\ge \Gamma_P+1$. Then at any vertex $u_i$, the elements of the minimal Hilbert basis for $C(P, u_i)$ lie in $(P-{u_i})\cap M$. 
\end{lemma}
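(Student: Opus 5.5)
Fix a vertex $u_i$, write $Q=C(P,u_i)$ with primitive ray generators $w_1,\dots,w_s$ (the $\hat u_{ij}$ for the adjacent vertices $u_j$), and let $\beta$ be an element of the minimal Hilbert basis of $Q\cap M$. The goal is to show $u_i+\beta\in P$. The plan is to use the standard fact that every Hilbert basis element is either a ray generator or lies in the half-open parallelepiped $S_Q$. Then I bound its height by $\Gamma_Q+1$, and finally compare with the edge lengths of $P$.

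Step 1: reduction to $S_Q$. Write $\beta=\sum_k a_k w_k$ with $a_k\ge 0$, using a simplicial subcone if $Q$ is not simplicial. If some $a_k\ge 1$, then $\beta-w_k\in Q\cap M$. Irreducibility of $\beta$ then forces $\beta=w_k$. A ray generator satisfies $u_i+w_k=u_i+\hat u_{ik}\in P$, since $l_{ik}\ge 1$. Otherwise every representation has $a_k<1$, so $\beta\in S_Q\cap M$; in the non-simplicial case I take such a representation over a triangulating simplicial cone. Since $\beta$ is a minimal Hilbert basis element, $\ord{\beta}=1$. By definition of $\Gamma_Q$ this gives $\height{\beta}\le \Gamma_Q+1\le \Gamma_P+1\le L(P)$.

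Step 2: height bound implies membership in $P$. Near $u_i$ the polytope agrees with its tangent cone, truncated by the other facets. I want $\{v\in Q: \height{v}\le L(P)\}\subseteq P-u_i$. Each $v$ in this set is a nonnegative combination $\sum a_k w_k$ with $\sum a_k\le L(P)$. This can be rewritten as
\[
v=\sum_k \frac{a_k}{l_{ik}}\,(l_{ik}\hat u_{ik})=\sum_k \frac{a_k}{l_{ik}}\,u_{ik},
\]
and $\sum_k a_k/l_{ik}\le \sum_k a_k/L(P)\le 1$, using $l_{ik}\ge L(P)$ for edges. Hence $u_i+v$ is a convex combination of $u_i$ and the adjacent vertices $u_k$, so it lies in $P$.

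The main obstacle is the first step in the non-simplicial case. There the height is a maximum over representations, while $S_Q$ and the parallelepiped argument need a particular representation. I would handle this by taking a representation realizing $\height{\beta}$, and checking that if some coefficient is $\ge 1$ then $\beta-w_k\in Q\cap M$, which contradicts irreducibility. So $\beta\in S_Q$ with respect to that representation. I also need to verify that $S_Q$ as defined in the paper contains every element all of whose coefficients are $<1$ in some representation, which holds since the definition allows arbitrary coefficients $a_i\in[0,1)$. For the simplex case needed in the paper, $Q$ is simplicial and the representation is unique, so this issue disappears.
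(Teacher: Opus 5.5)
Your proof is correct and follows essentially the same route as the paper. You use $\ord{\beta}=1$ and the definition of $\Gamma_Q$ to get $\sum_k a_k\le \height{\beta}\le \Gamma_P+1\le L(P)$, and then rewrite $\beta=\sum_k \frac{a_k}{l_{ik}}u_{ik}$ with $\sum_k a_k/l_{ik}\le 1$. Your explicit handling of the case where $\beta$ is a primitive ray generator fills in a point the paper passes over when it simply writes $a_j\in[0,1)$. The same argument shows that otherwise every representation of $\beta$ has all coefficients $<1$, so $\beta\in S_Q$; this already settles the non-simplicial worry raised in your last paragraph.
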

\begin{proof}
We can assume $i=0$. Let $y$ be an element of the minimal Hilbert basis of $C(P,u_0)$. Let $u_1,\cdots, u_m$ be the vertices of $P$ adjacent to $u_0$. We write $y=\sum^m_{j=1} a_j\hat{u}_{0j}$ for some $a_j\in \mathbb{Q}\cap [0, 1)$. 

By the definitions of $\text{ht}(y),\ \Gamma_{C(P,u_0)}$, and $\Gamma_P$, and using $\text{ord}(y)=1$, we have
\[\sum^m_{j=1} a_j\le\text{ht}(y)\le \Gamma_{C(P,u_0)}+1\le \Gamma_P+1.\]
As a result,
\[\sum^m_{j=1}\frac{a_j}{l_{0j}}\le \paren{\frac{1}{\Gamma_P+1}}\sum^m_{j=1} a_j\le 1.\]
Finally, noting $y=\sum^m_{j=1}\frac{a_j}{l_{0j}}(l_{0j}\hat{u}_{0j})$, we have $y+u_0\in P\cap M$. 
\end{proof}

\begin{proof}[Proof of Proposition \ref{decomp for r=2}]
To begin  with, we introduce the following notation. For $v \in C(P,u_0)$, write $v=\sum\limits_{i=1}^n x_i u_{0i}$, and define
\[
|v|=\sum\limits_{i=1}^n x_i.
\]

Step 0 (setup): Given $u=\sum^n_{i=0}x_iu_i\in 2P\cap M$, we can assume that $x_0=\max\limits_{0\le i\le n}\{x_i\}$, in particular $x_0\ge \frac{2}{n+1}$. We write
\[u=2u_0+\sum^n_{i=1} x_i(l_{0i}\hat{u}_{0i}).\]

Step 1 (reduction): For each $1\le i\le n$, there exists a unique non-negative integer $t_i$ such that
\[\frac{t_i}{l_{0i}}\le x_i<\frac{t_i+1}{l_{0i}}.\]
Put $\tilde{x}_i=x_i-\frac{t_i}{l_{0i}}$. Then
\[u=2u_0+\sum^n_{i=1} \tilde{x}_i(l_{0i}\hat{u}_{0i})+\sum^n_{i=1} 
\frac{t_i}{l_{0i}}(l_{0i}\hat{u}_{0i}).\]

Note for all $i>0$, $\frac{t_i}{l_{0i}}\le x_i\le x_0$, so by Lemma \ref{harness}, if we are able to write $\tilde{x}:=\sum\limits^n_{i=1} \tilde{x}_i(l_{0i}\hat{u}_{0i})=\tilde{v}+\tilde{v}'$ such that $|\tilde{v}|, |\tilde{v}'|\le 1$ and $\tilde{v}, \tilde{v}'\in (P-u_0)\cap M$, then there exists a subset $I\subseteq [n]$ such that
\[|\tilde{v}|+\sum_{i\in I}\frac{t_i}{l_{0i}}\le 1, \text{ and }
|\tilde{v}'|+\sum_{i\in [n]\backslash I}\frac{t_i}{l_{0i}}\le 1.\]
As a result, we can write $u=v_1+v_2$, where
\begin{eqnarray*}
v_1 &=& u_0+\tilde{v}+\sum_{i\in I}\frac{t_i}{l_{0i}}(l_{0i}\hat{u}_{0i})\in P\cap M,\\
v_2 &=& u_0+\tilde{v}'+\sum_{i\in [n]\backslash I}\frac{t_i}{l_{0i}}(l_{0i}\hat{u}_{0i})\in P\cap M,
\end{eqnarray*}
as asserted.

Step 2: In view of the above step, we can replace $x_i$ by $\tilde{x}_i$. So it remains to consider the case where $x_i<\frac{1}{l_{0i}}$ for all $i>0$. 

Take $y_1, \cdots, y_m$ (not necessarily distinct) in the minimal Hilbert basis for $C(P, u_0)\cap M$ such that
\[u-2u_0=\sum^m_{i=1} y_i.\]
We have
\begin{equation}\label{upper bound for modulus}
\sum^m_{i=1} |y_i|=\sum^n_{i=1}x_i<\sum^n_{i=1}\frac{1}{l_{0i}}\le \frac{n}{n-1}.
\end{equation}
Therefore
\begin{equation}
    \label{eq: key formulu 2}
    2-\sum^m_{i=1}|y_i|>2-\frac{n}{n-1}=\frac{n-2}{n-1}.
\end{equation}

We shall proceed by cases.

Case (i): Assume that $|y_j|\le 2-\sum\limits^m_{i=1}|y_i|$ for all $1\le j\le m$. \\
Then by Lemma \ref{harness}, there exists a subset $J\subseteq [m]$ such that 
$\sum\limits_{j\in J} |y_j|\le 1$ and $\sum\limits_{j\in [m]\backslash J} |y_j|\le 1$. Therefore, $u=v_1+v_2$, where
\begin{eqnarray*}
v_1&=& u_0+\sum_{j\in J} y_j\in P\cap M, \\
v_2&=& u_0+\sum_{j\in [m]\backslash J} y_j\in P\cap M.
\end{eqnarray*}
We are done. 

Case (ii): For some $s$, it holds that $|y_s|>2-\sum\limits_{i=1}^m |y_i|$. Without loss of generality, we may assume that $s = m$. \\
Then we claim
\begin{equation}\label{equality}
\sum^{m-1}_{i=1}|y_i|\le 1.
\end{equation}

For otherwise, 
\[\sum^m_{i=1}|y_i|>1+|y_m|>1+(2-\sum_{i=1}^m |y_i|)>1+\frac{n-2}{n-1},\]
where the last inequality follows from \eqref{eq: key formulu 2}. This immediately contradicts (\ref{upper bound for modulus}) provided $n\ge 3$. When $n=2$, we have 
\[|y_m|> 2-\sum^m_{i=1}|y_i|=x_0\ge \frac{2}{3},\]
which implies that
\[\sum^{m-1}_{i=1}|y_i|\le 2-\frac{2}{3}-|y_m|< 2-2\times \frac{2}{3}<1.\]
Thus in any event, we have established inequality (\ref{equality}). 

Finally, combining (\ref{equality}) together with $|y_m|\le 1$, which is guaranteed by Lemmas \ref{Gamma} and \ref{small norm}, we can write
$u=v_1+v_2$, where
\begin{eqnarray*}
v_1&=& u_0+y_m\in P\cap M,\\
v_2 &=& u_0+\sum^{m-1}_{i=1} y_i\in P\cap M.
\end{eqnarray*}
This completes the proof.
\end{proof}

By adapting the argument in the proof of Theorem \ref{thm: main thm 1}, we obtain the following more general sufficient condition for the integral closedness of a lattice simplex $P$ in terms of the invariant $\Gamma_P$ associated with $P$.

Now we prove Theorem \ref{thm: main thm 2}.
\begin{proof}
    When $n=2$, Lemma \ref{Gamma} implies that $\Gamma_P = 0$. Hence the statement follows from Theorem \ref{thm: main thm 1}.
    
    When $n\ge3$, the proof is analogous to those of Propositions \ref{r>=3 generalization} and \ref{decomp for r=2}. In the setting of Proposition \ref{r>=3 generalization}, we take $m=n$ and rewrite \eqref{eq: key formula 1} as
    \begin{equation}
        \sum_{i=1}^n \tilde{x}_i <\sum_{i=1}^n \frac{1}{l_{0i}}\le \frac{n}{\Gamma_P+1}\le \frac{3}{2},
    \end{equation}
    where the last two inequalities follow from our assumptions. Consequently, if $r-1\ge \frac{3}{2}$, and hence $r\ge 3$, then 
    \begin{equation}
    \label{eq: r>=3}
        rP=(r-1)P+P\cap M.
    \end{equation}

   It remains to show that $P\cap M+P\cap M=2P \cap M$. Following the proof of Proposition \ref{decomp for r=2}, it suffices to reconsider Step 2. Under the present assumption, we have
    \begin{equation}
    \label{eq: upper bound}
        \sum^m_{i=1} |y_i|=\sum^n_{i=1}x_i<\sum^n_{i=1}\frac{1}{l_{0i}}\le \frac{n}{\Gamma_P+1},
    \end{equation}
    and hence,
    \begin{equation}
        \label{eq: lower bound}
        2-\sum_{i=1}^m |y_i|> 2-\frac{n}{\Gamma_P+1}.
    \end{equation}
    These inequalities are analogous to \eqref{upper bound for modulus} and \eqref{eq: key formulu 2}.
    We now proceed by cases. The argument for Case (i), i.e., $|y_j|\le 2-\sum\limits^m_{i=1}|y_i|$ for all $1\le j\le m$, is identical to that in Proposition \ref{decomp for r=2}. For Case (ii), i.e., $|y_m|>2-\sum\limits_{i=1}^m|y_i|$, we still claim that
    \begin{equation}
        \sum_{i=1}^{m-1} |y_i|\le 1.
    \end{equation}
    Otherwise,
    \[\sum_{i=1}^m |y_i|>1+|y_m|>1+(2-\sum_{i=1}^m |y_i|)\ge 3-\frac{n}{\Gamma_P+1},\]
    where the last inequality follows from \eqref{eq: lower bound}.
    This contradicts \eqref{eq: upper bound} provided that $\Gamma_P\ge \frac{2}{3} n-1$. Therefore, we deduce  that $u=(u_0+y_m)+(u_0+\sum\limits_{i=1}^{m-1}y_i)$. Hence we have proved 
    \begin{equation}
    \label{eq: r=2}
         P\cap M+P\cap M=2P\cap M.
    \end{equation}
This completes the proof.
\end{proof}

\section{Some Open Problems}
In\cite{Oda08}, Oda asked whether any smooth lattice polytope $P$ is integrally closed. Since smooth polytopes satisfy $\Gamma_P=0$, it is natural to consider the following general question:

\begin{problem}
Let $P\subseteq M_{\RR}$ be a lattice polytope. If $L(P)\ge \Gamma_P+1$, then is $P$ integrally closed?
\end{problem}
The problem can be viewed as a unifying framework that includes both Oda’s conjecture and Conjecture \ref{Conj: Song-Zhu} as special cases: indeed, in light of Lemma \ref{Gamma}, an affirmative answer would immediately imply the validity of both conjectures. Theorem \ref{thm: main thm 2} provides some evidence in this direction.
 
Finally, in the toric setting, integral  closedness is equivalent to projective normality, which amounts to Property $N_p$  (cf. \cite{Lazarsfeld04}) for $p=0$. Inspired by Mukai's conjecture, the first and third authors propose the following conjecture.
\begin{conjecture}
Let $L$ be an ample line bundle on a projective toric variety $X$ of dimension $n\ge 2$. Suppose $L\cdot C\ge n-1+p$ for any $T$-invariant curve $C$. Then $L$ satisfies Property $N_p$.
\end{conjecture}
The conjecture is true for $n=2$ by \cite{Sch04}, and for arbitrary $n$ but $L=A^{\otimes k}$ with $k\ge n-1+p$ by \cite{HSS06}.
In a recent paper \cite{SW26}, the conjecture is proved for a class of smooth projective toric varieties, including the blowup of $\prj{n}$ at $k$ points in general position, for $0\le k\le n+1$; the blowup of $\prj{n}$ along $\prj{k}$, for $0\le k\le n-1$; and projective bundles $\mathbb{P}(\sshf{\prj{n}}^{\oplus r_1}\oplus \sshf{\prj{n}}(1)^{\oplus r_2})$, for $r_1, r_2\ge 1$. Moreover, the class is closed under taking finite products. 

\bibliographystyle{alpha}
\bibliography{toric}

@article{HSS06,
  title={Syzygies, multigraded regularity and toric varieties},
  author={Hering, Milena and Schenck, Hal and Smith, Gregory G.},
  journal={Compositio Mathematica},
  volume={142},
  number={6},
  pages={1499--1506},
  year={2006},
  publisher={London Mathematical Society}
}

@article {Sch04,
    AUTHOR = {Schenck, Hal},
     TITLE = {Lattice polygons and {G}reen's theorem},
   JOURNAL = {Proc. Amer. Math. Soc.},
  FJOURNAL = {Proceedings of the American Mathematical Society},
    VOLUME = {132},
      YEAR = {2004},
    NUMBER = {12},
     PAGES = {3509--3512},
      ISSN = {0002-9939},
   MRCLASS = {52B35 (14M25)},
  MRNUMBER = {2084071},
       DOI = {10.1090/S0002-9939-04-07523-9},
       URL = {https://doi.org/10.1090/S0002-9939-04-07523-9},
}

@article{GonZhu2022,
title = {Generation of jets and {F}ujita's jet ampleness conjecture on toric varieties},
journal = {Journal of Pure and Applied Algebra},
volume = {226},
number = {4},
pages = {106873},
year = {2022},
issn = {0022-4049},
doi = {https://doi.org/10.1016/j.jpaa.2021.106873},
url = {https://www.sciencedirect.com/science/article/pii/S0022404921002140},
author = {José Luis González and Zhixian Zhu}
}

@article{Fujino03,
AUTHOR = {Fujino, Osamu},
TITLE = {Notes on toric varieties from {M}ori theoretic viewpoint},
JOURNAL = {Tohoku Math. J. (2)},
VOLUME = {55},
YEAR = {2003},
NUMBER = {4},
PAGES = {551--564},
}

@book {FultonIntroToToric,
    AUTHOR = {Fulton, William},
     TITLE = {Introduction to toric varieties},
    SERIES = {Annals of Mathematics Studies},
    VOLUME = {131},
 PUBLISHER = {Princeton University Press, Princeton, NJ},
      YEAR = {1993},
     PAGES = {xii+157},
      ISBN = {0-691-00049-2},
   MRCLASS = {14M25 (14-02 14J30)},
  MRNUMBER = {1234037},
MRREVIEWER = {T.\ Oda},
       DOI = {10.1515/9781400882526},
       URL = {https://doi.org/10.1515/9781400882526},
}

@book {CoxToric,
    AUTHOR = {Cox, David A. and Little, John B. and Schenck, Henry K.},
     TITLE = {Toric varieties},
    SERIES = {Graduate Studies in Mathematics},
    VOLUME = {124},
 PUBLISHER = {American Mathematical Society, Providence, RI},
      YEAR = {2011},
      ISBN = {978-0-8218-4819-7},
   MRCLASS = {14M25 (05A15 05E45 52B12)},
  MRNUMBER = {2810322},
MRREVIEWER = {Ivan\ Arzhantsev},
       DOI = {10.1090/gsm/124},
       URL = {https://doi.org/10.1090/gsm/124},
}

@article {EL93,
    AUTHOR = {Ein, Lawrence and Lazarsfeld, Robert},
     TITLE = {Syzygies and {K}oszul cohomology of smooth projective
              varieties of arbitrary dimension},
  JOURNAL = {Inventiones Mathematicae},
    VOLUME = {111},
      YEAR = {1993},
    NUMBER = {1},
     PAGES = {51--67},
      ISSN = {0020-9910,1432-1297},
   MRCLASS = {13D02 (14F17 14J60)},
  MRNUMBER = {1193597},
MRREVIEWER = {Gary\ P.\ Kennedy},
       DOI = {10.1007/BF01231279},
       URL = {https://doi.org/10.1007/BF01231279},
}

@book{Lazarsfeld04,
title={Positivity in algebraic geometry {I}: Classical setting: line bundles and linear series},
author={Lazarsfeld, Robert},
volume={48},
year={2004},
publisher={Springer}
}

@article {OgataNakagawa2002,
    AUTHOR = {Ogata, Shoetsu and Nakagawa, Katsuyoshi},
     TITLE = {On generators of ideals defining projective toric varieties},
  JOURNAL = {Manuscripta Mathematica},
    VOLUME = {108},
      YEAR = {2002},
    NUMBER = {1},
     PAGES = {33--42},
      ISSN = {0025-2611,1432-1785},
   MRCLASS = {14M25 (52B20)},
  MRNUMBER = {1912946},
MRREVIEWER = {Henry\ K.\ Schenck},
       DOI = {10.1007/s002290200252},
       URL = {https://doi.org/10.1007/s002290200252},
}

@article {Gubeladze2012,
    AUTHOR = {Gubeladze, Joseph},
     TITLE = {Convex normality of rational polytopes with long edges},
  JOURNAL = {Advances in Mathematics},
    VOLUME = {230},
      YEAR = {2012},
    NUMBER = {1},
     PAGES = {372--389},
      ISSN = {0001-8708,1090-2082},
   MRCLASS = {52B20 (14M25)},
  MRNUMBER = {2900547},
MRREVIEWER = {Milena\ S.\ Hering},
       DOI = {10.1016/j.aim.2011.12.003},
       URL = {https://doi.org/10.1016/j.aim.2011.12.003},
}

@article{Oda08,
      title={Problems on {M}inkowski sums of convex lattice polytopes}, 
      author={Tadao Oda},
      journal={arXiv preprint arXiv:0812.1418},
      year={2008}
}

@article{SWZ24,
      title={On covering simplices by dilations in dimensions 3 and 4}, 
      author={Song,Lei and Wen, Huanqi and Zhu, Zhixian},
      journal={arXiv preprint arXiv:2404.02495},
      year={2024}
}

@article{SW26,
      title={On {P}roperty ${N}_p$ of line bundles on smooth projective toric varieties}, 
      author={Song,Lei and Wen, Huanqi},
      journal={arXiv preprint arXiv:2606.30160},
      year={2026}
}

\bigskip
\noindent\small{\textsc{School of Mathematics, Sun Yat-sen University\\
W. 135 Xingang Rd., Guangzhou, Guangdong 510275, P.R.~China}\\
\emph{E-mail address}:  \texttt{songlei3@mail.sysu.edu.cn}

\bigskip
\noindent\small{\textsc{School of Mathematics, Sun Yat-sen University\\
W. 135 Xingang Rd., Guangzhou, Guangdong 510275, P.R.~China}\\
\emph{E-mail address}:  \texttt{wenhq7@mail2.sysu.edu.cn}

\bigskip
\noindent\small{\textsc{Academy for Multidisciplinary Studies, Capital Normal University\\
No. 105 West 3rd Ring Road, Beijing 100048, P.R.~China}\\
\emph{E-mail address}:  \texttt{zhixian@cnu.edu.cn}

\end{document}